\def\pageinfo{Pages \start@page--\end@page{} (June 24, 2024)}
\def\@maketitle{%
  \normalfont\normalsize
  \let\@makefnmark\relax  \let\@thefnmark\relax
  \ifx\@empty\@date\else \@footnotetext{\@setdate}\fi
  \ifx\@empty\@subjclass\else \@footnotetext{\@setsubjclass}\fi
  \ifx\@empty\@keywords\else \@footnotetext{\@setkeywords}\fi
  \ifx\@empty\thankses\else \@footnotetext{%
    \def\par{\let\par\@par}\@setthanks}\fi
  \global\topskip42\p@ % 5.5 picas to the base of the first title line
  \@settitle
  \ifx\@empty\authors \else \@setauthors \fi
  \ifx\@empty\@commby
  \else
    \baselineskip18\p@
    \vtop{\centering{\footnotesize\@commby\@@par}%
      \global\dimen@i\prevdepth}\prevdepth\dimen@i
  \fi
  \ifx\@empty\@dedicatory
  \else
    \baselineskip18\p@
    \vtop{\centering{\footnotesize\itshape\@dedicatory\@@par}%
      \global\dimen@i\prevdepth}\prevdepth\dimen@i
  \fi
  \@setabstract
  \normalsize
  \dimen@24\p@ \advance\dimen@-\baselineskip
  \vskip\dimen@\relax
} % end \@maketitle
\def\MR#1{\href{https://www.ams.org/mathscinet-getitem?mr=#1}{MR#1}}
\newtheorem{theorem}{Theorem}[section]
\newtheorem*{question*}{Question}
\theoremstyle{definition}
\newtheorem{problem}[theorem]{Problem}
\newcommand{\R}{\mathbb{R}}
\newcommand{\C}{\mathbb{C}}
\numberwithin{equation}{section}
\title[Sign uncertainty principles and low-degree polynomials]{Sign uncertainty principles\\ and low-degree polynomials}
\author{Henry Cohn}
\address{Microsoft Research New England, One Memorial Drive, Cambridge, Massachusetts
02142}
\email{cohn@microsoft.com}
\author{Dingding Dong}
\address{Department of Mathematics, Harvard University, Cambridge, Massachusetts 02138}
\email{ddong@math.harvard.edu}
\author{Felipe Gon\c{c}alves}
\address{Instituto Nacional de Matem\'atica Pura e Aplicada, Rio de Janeiro, Brazil}
\email{goncalves@impa.br}
\date{October 17, 2023}
\subjclass[2020]{Primary 42B10, 42C05; Secondary 52C17.}
\begin{document}
 
\begin{abstract}
We prove an asymptotically sharp version of the Bourgain--Clozel--Kahane and Cohn--Gon\c{c}alves sign uncertainty principles for polynomials of sublinear degree times a Gaussian, as the dimension tends to infinity. In particular, we show that polynomials whose degree is sublinear in the dimension cannot improve asymptotically on those of degree at most three. This question arises naturally in the study of both linear programming bounds for sphere packing and the spinless modular bootstrap bound for free bosons.
\end{abstract}

\maketitle

\section{Introduction}
\label{sec:intro}

Suppose we normalize the Fourier transform $\widehat{f}$ of an integrable function $f \colon \R^d \to \C$ by
\[
\widehat{f}(y) = \int_{\R^d} f(x) e^{-2\pi i \langle x,y \rangle}\, dx.
\]
Which properties of $f$ and $\widehat{f}$ are consistent, in the sense that there exists a function $f$ for which $f$ and $\widehat{f}$ behave as specified? This question is a fundamental research topic in harmonic analysis. Often, there is a trade-off between how $f$ and $\widehat{f}$ can behave. For example, the Heisenberg uncertainty principle says roughly that if $f$ is tightly concentrated near a point (i.e., its values decay quickly away from that point), then $\widehat{f}$ must be more dispersed. More generally, an uncertainty principle is any theorem that expresses a similar trade-off.

Bourgain, Clozel, and Kahane \cite{BCK10} discovered a beautiful uncertainty principle for the signs of functions. It deals with functions for which both $f$ and $\widehat{f}$ are real-valued; specifically, suppose $f \colon \R^d \to \R$ is integrable, $\widehat{f}$ is real-valued (equivalently, $f$ is an even function), and $\widehat{f}$ is integrable. Furthermore, we impose the following sign conditions on $f$ and $\widehat{f}$ for some nonnegative real numbers $\rho_1$ and $\rho_2$:
\begin{enumerate}
\item $f(x) \ge 0$ whenever $|x| \ge \rho_1$, while $\widehat{f}(0) \le 0$, and

\item $\widehat{f}(y) \ge 0$ whenever $|y| \ge \rho_2$, while $f(0) \le 0$.
\end{enumerate}
Assuming $f$ does not vanish almost everywhere, $\rho_1$ and $\rho_2$ must be positive; for example, if $\rho_1=0$, then $\widehat{f}(0) = \int_{\R^d} f(x) \, dx \ge 0$, and therefore $\widehat{f}(0)=0$, which implies that $f$ vanishes almost everywhere. Either $\rho_1$ or $\rho_2$ can be made arbitrarily close to zero on its own, but
Bourgain, Clozel, and Kahane proved that $\rho_1\rho_2$ is bounded away from $0$, and, in fact, that $\rho_1\rho_2 \ge cd$ for a positive constant $c$. 

The optimal bound is not known in any dimension except $d=12$, where Cohn and Gon\c{c}alves \cite{CG19} showed that the minimal possible value of $\rho_1\rho_2$ is $2$ by using techniques that originated in Viazovska's work on sphere packing \cite{CKMRV17,V17}. Cohn and Gon\c{c}alves also studied a related uncertainty principle, which replaces the inequalities on $f$ and $\widehat{f}$ with
\begin{enumerate}
\item $f(x) \ge 0$ whenever $|x| \ge \rho_1$, while $\widehat{f}(0) \le 0$, and

\item $\widehat{f}(y) \le 0$ whenever $|y| \ge \rho_2$, while $f(0) \ge 0$.
\end{enumerate}
This variant turns out to be even more closely related to sphere packing bounds. It has been solved exactly for $d=1$, $8$, and $24$, with the minimal value of $\rho_1\rho_2$ being $1$, $2$, and $4$, respectively. In fact, these bounds are implied by the sharpness of the linear programming bound for sphere packing in these dimensions. Furthermore, the answer is conjectured to be $2/\sqrt{3}$ for $d=2$, and it is known to agree with $2/\sqrt{3}$ to at least one thousand decimal places \cite{ACHLT}. See \cite{ACHLT,CG19} for numerical computations, as well as 
\cite{GOR} for a different numerical technique, which leads to the remarkable conjecture that $\rho_1\rho_2=1/(1+\sqrt{5})$ for $d=1$ in the Bourgain--Clozel--Kahane case.

These sign uncertainty optimization problems can be reduced to the following problem on eigenfunctions of the Fourier transform \cite{BCK10,CG19}, with $\rho=\sqrt{\rho_1\rho_2}$. Here $s=1$ corresponds to the Bourgain--Clozel--Kahane case, while $s=-1$ corresponds to the other variant.

\begin{problem} \label{problem:eigenvalue}
Let $s \in \{\pm1\}$, and let $d$ be a positive integer. What is the smallest real number $\rho \ge 0$ for which there exists a radial, integrable function $f \colon \R^d \to \R$ such that
\begin{enumerate}
\item $f$ is not identically zero,

\item $\widehat{f} = s f$, 

\item $f(0)=0$, and

\item $f(x) \ge 0$ whenever $|x| \ge \rho$?
\end{enumerate}
\end{problem}

Cohn and Gon\c{c}alves conjectured that the limit of $\rho/\sqrt{d}$ as $d \to \infty$ exists and is independent of the choice of $s$ \cite[Conjecture~1.5]{CG19}. Determining the value of this limit is a key open problem in this area. Conjecture~3.2 in \cite{ACHLT} predicts that $\rho \sim \sqrt{d}/\pi$ as $d \to \infty$, which would improve on all existing bounds. However, the available numerics yield only three or four digits of accuracy for the limit, which makes it difficult to predict the exact answer with confidence.

In the absence of an exact solution, it is natural to study this problem numerically. One particularly tractable family of test functions is polynomials times Gaussians: we can take $f(x) = p(2 \pi |x|^2) e^{-\pi |x|^2}$, where $p$ is a single-variable polynomial.  To compute the Fourier transform of $f$, it is convenient to write $p$ as a linear combination of the Laguerre polynomials $L_k^{d/2-1}$ of degree $k$ with parameter $d/2-1$, because $x \mapsto L_k^{d/2-1}(2\pi |x|^2) e^{-\pi |x|^2}$ is an eigenfunction of the Fourier transform with eigenvalue $(-1)^k$. Thus, the Fourier transform $\widehat{f}$ is also given by $\widehat{f}(y) = q(2 \pi |y|^2) e^{-\pi |y|^2}$ for some polynomial $q$, namely the corresponding linear combination of $(-1)^k L_k^{d/2-1}$. 

Setting $f(x) = p(2 \pi |x|^2) e^{-\pi |x|^2}$ with a suitably chosen polynomial $p$ of degree at most three yields $\rho \sim \sqrt{d/(2\pi)}$ (see \cite{BCK10,CG19}). Our main theorem in this paper says that we obtain the same asymptotics as $d \to \infty$ whenever the degree of $p$ is sublinear in $d$. In particular, our result proves Conjecture~3.2 in \cite{CG19}, which is the special case when the degree of $p$ is bounded.

\begin{theorem} \label{theorem:intro}
Let $\rho_{d,s,n}$ be the optimal value of $\rho$ in Problem~\ref{problem:eigenvalue} when $f$ is restricted to be of the form $f(x) = p(2 \pi |x|^2) e^{-\pi |x|^2}$ with $p$ a polynomial of degree at most $n$.  Then for each  $s = \pm 1$ and every function $d \mapsto n(d) \in \{3, 4, 5, \dots\}$ with $\lim_{d \to \infty} n(d)/d = 0$,
\[
\rho_{d,s,n(d)} \sim \sqrt{\frac{d}{2\pi}} 
\]
as $d \to \infty$.
\end{theorem}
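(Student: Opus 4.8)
The plan is to establish matching asymptotic upper and lower bounds for $\rho_{d,s,n(d)}$. The upper bound is immediate: $\rho_{d,s,n}$ is non-increasing in $n$, since a larger degree bound only enlarges the class of admissible $f$, so $\rho_{d,s,n(d)} \le \rho_{d,s,3}$ whenever $n(d) \ge 3$, and the excerpt already records that a suitable degree-$3$ polynomial times a Gaussian achieves $\rho \sim \sqrt{d/(2\pi)}$. Hence $\rho_{d,s,n(d)} \le (1 + o(1))\sqrt{d/(2\pi)}$, and all the content lies in the lower bound $\rho_{d,s,n(d)} \ge (1 - o(1))\sqrt{d/(2\pi)}$.

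To prepare for it, let $f(x) = p(2\pi|x|^2) e^{-\pi|x|^2}$ be admissible in Problem~\ref{problem:eigenvalue} with $\deg p \le n := n(d)$, and put $R := 2\pi\rho^2$. Writing everything in the variable $u = 2\pi|x|^2$, the problem requires $p \not\equiv 0$, $\widehat f = sf$, $f(0) = p(0) = 0$, and $p(u) \ge 0$ for every $u \ge R$. The eigenfunction equation supplies the crucial extra identity $\int_{\R^d} f\,dx = \widehat f(0) = s f(0) = 0$. Since $e^{-\pi|x|^2}$ is a probability density on $\R^d$ under which $2\pi|x|^2$ has the $\chi^2_d$ distribution, this says precisely that $\mathbb{E}[p(U)] = 0$ for $U \sim \chi^2_d$.

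Next I would invoke Gaussian quadrature for the law $\mu$ of $\chi^2_d$. Let $\xi_1 < \cdots < \xi_n$ be the (positive, simple) zeros of the degree-$n$ orthogonal polynomial of $\mu$, and let $w_1, \dots, w_n > 0$ be the associated Christoffel numbers, so that $\mathbb{E}[g(U)] = \sum_{i=1}^n w_i\, g(\xi_i)$ for every polynomial $g$ of degree at most $2n - 1$; applying this with $g = p$ (legitimate, as $\deg p \le n \le 2n - 1$) gives $\sum_{i=1}^n w_i\, p(\xi_i) = 0$. The point is that when $n$ is sublinear in $d$ these nodes cluster near the mean $d$ of $\chi^2_d$: a change of variables identifies $\xi_i$ with twice the $i$-th zero of the Laguerre polynomial $L_n^{(\alpha)}$ with $\alpha = d/2 - 1$, and the zeros of $L_n^{(\alpha)}$ are all at least $(1 - o(1))\alpha$ once $n/\alpha \to 0$ — they are the eigenvalues of the $n \times n$ tridiagonal Jacobi matrix with diagonal entries $2k + \alpha + 1$ and off-diagonal entries $\sqrt{k(k + \alpha)} \le \sqrt{n(n + \alpha)}$, so Gershgorin's theorem bounds each of them below by $(\alpha + 1) - 2\sqrt{n(n + \alpha)} = (1 - o(1))\alpha$. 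Hence $\xi_1 \ge (1 - o(1))d$.

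The argument then closes. I claim $R \ge \xi_1$. Suppose not, so $R < \xi_1 \le \xi_i$ for every $i$; then $p(\xi_i) \ge 0$ for all $i$, and since the weights $w_i$ are strictly positive the identity $\sum_i w_i\, p(\xi_i) = 0$ forces $p(\xi_i) = 0$ for all $i$. Thus $p$, a polynomial of degree at most $n$, vanishes at the $n$ distinct points $\xi_1, \dots, \xi_n$, so $p(u) = c\prod_{i=1}^n (u - \xi_i)$ for some constant $c$; evaluating at $u = 0$ gives $0 = p(0) = c(-1)^n \prod_i \xi_i$, and since the $\xi_i$ are positive we conclude $c = 0$, i.e.\ $p \equiv 0$, contradicting admissibility. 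Therefore $R \ge \xi_1 \ge (1 - o(1))d$, so $\rho = \sqrt{R/(2\pi)} \ge (1 - o(1))\sqrt{d/(2\pi)}$, uniformly in $s = \pm 1$, and together with the upper bound this gives $\rho_{d,s,n(d)} \sim \sqrt{d/(2\pi)}$. The one step that is more than formal is the clustering of the Laguerre zeros near the parameter when the degree is sublinear in it, and that is where I expect whatever mild work there is to go; note also that $s$ enters only through the identity $\int f = s f(0)$, which explains why the answer is the same for $s = \pm 1$.
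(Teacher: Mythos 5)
Your argument is correct, and its core is the same as the paper's: translate $\widehat f(0)=0$ into orthogonality against the measure $e^{-u}u^{d/2-1}\,du$, apply Gauss quadrature with positive weights, and conclude that $p$ cannot be nonnegative on the quadrature nodes unless it degenerates, so the last sign region must begin at least at the smallest node, which is close to $d$ when the degree is sublinear; the degree-$3$ upper bound and monotonicity in $n$ are handled exactly as in the paper. The differences are worth noting. First, you use the $n$-point rule, while the paper uses only $m=\lfloor n/2\rfloor+1$ nodes (enough for exactness on degree $n$); this matters because the smallest zero of $L_m^{d/2-1}$ grows as $m$ shrinks, so the paper's choice yields a sharper quantitative statement (Theorem~\ref{theorem:main} and the concluding bounds for degree $n=(c+o(1))d$), though for $n=o(d)$ either choice suffices. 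Second, you rule out the degenerate case by invoking $p(0)=0$ to supply an extra root and force $p\equiv 0$, whereas the paper's multiplicity-counting argument (all nodes cannot be double roots since $2m>n$) needs only $\widehat f(0)=0$ and concludes a genuine sign change; your version therefore proves a slightly weaker intermediate statement than Theorem~\ref{theorem:main}, but it is entirely adequate for Theorem~\ref{theorem:intro}. Third, for the location of the smallest Laguerre zero you give a self-contained Gershgorin bound on the Jacobi matrix (diagonal $2k+\alpha+1$, off-diagonals $\sqrt{k(k+\alpha)}$), giving smallest zero at least $(\alpha+1)-2\sqrt{n(n+\alpha)}=(1-o(1))\alpha$; this is a nice elementary substitute for the Ismail--Li bound \eqref{eq:lambda} that the paper cites (and must patch in a footnote), at the cost of a weaker constant that would not support the paper's later discussion of linear-degree polynomials. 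In short: same skeleton, a more elementary and self-contained zero estimate, and a mildly less efficient quadrature/degeneracy step that still delivers the asymptotic result.
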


This theorem shows that to obtain informative numerics in high dimensions, we must use high-degree polynomials. It would be interesting to have a more precise quantitative relationship between the dimension, polynomial degree, and quality of the optimized bound. We do not even have a proof that $\rho_{d,s,n}$ converges to the optimal value of $\rho$ in Problem~\ref{problem:eigenvalue} as $n \to \infty$ with $d$ fixed, although we expect that is true.

When $s=-1$ and $d$ is even, Problem~\ref{problem:eigenvalue} arises in theoretical physics \cite{HMR} as the spinless modular bootstrap bound with $U(1)^c$ symmetry, where $c=d/2$. It gives an upper bound of $\rho^2/2$ for the spectral gap in two-dimensional conformal field theories describing $c$ free bosons, with the most interesting case being the limit as $c \to \infty$. While most conformal field theories of interest in physics are not of this form, free theories are a noteworthy special case; for example, they provide a natural candidate for a holographic dual to an exotic theory of three-dimensional gravity \cite{ACHT,MW}. Polynomials times Gaussians are the primary choice of test function for numerical computations in this area (see, for example, \cite{AHT}), and our theorem therefore sets limits on the feasibility of these computations for large $c$.

Our approach to proving Theorem~\ref{theorem:intro} was inspired by a paper of Friedan and Keller \cite{FK}, who gave a heuristic derivation of a result analogous to Theorem~\ref{theorem:intro} for bounded-degree polynomials in a closely related problem, namely the spinless modular bootstrap assuming only Virasoro symmetry (see  \cite[Section~3.2]{FK}). We do not know how to produce a rigorous proof directly from their techniques, but our proof was motivated by their work.

\section{Proof of Theorem~\ref{theorem:intro}}
\label{sec:proof}

We will deduce Theorem~\ref{theorem:intro} from a slightly stronger bound, which is not restricted to eigenfunctions:

\begin{theorem} \label{theorem:main}
Suppose $f \colon \R^d \to \R$ is given by $f(x) = p(2\pi |x|^2) e^{-\pi |x|^2}$, where $p$ is a polynomial of degree at most $n$ and $p$ is not identically zero. If $\widehat{f}(0)=0$, then $p$ has a sign change at or beyond $2\lambda$, where $\lambda$ is the smallest root of the Laguerre polynomial $L_{m}^{d/2-1}$ of degree $m = \lfloor n/2 \rfloor + 1$. 
\end{theorem}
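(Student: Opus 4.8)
The plan is to work in the basis of Laguerre polynomials, where the Fourier transform acts diagonally, and to translate the hypothesis $\widehat f(0)=0$ together with the desired conclusion into a statement about a quadrature/positivity obstruction. Write $p = \sum_{k=0}^{n} c_k L_k^{d/2-1}$, so that $\widehat f(y) = q(2\pi|y|^2)e^{-\pi|y|^2}$ with $q = \sum_k (-1)^k c_k L_k^{d/2-1}$. Using $L_k^{d/2-1}(0) = \binom{k+d/2-1}{k}$ and the standard formula for $\int_{\R^d} L_k^{d/2-1}(2\pi|x|^2)e^{-\pi|x|^2}\,dx$ (which is, up to a fixed positive normalization, again $L_k^{d/2-1}(0)$ or a constant, depending on how one sets it up), the condition $\widehat f(0)=0$ becomes a single linear relation $\sum_k (-1)^k c_k \,\alpha_k = 0$ with explicit positive weights $\alpha_k$. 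The point is that this is exactly the statement that $q$ is orthogonal, in the relevant weighted $L^2$ sense on $[0,\infty)$ with weight coming from $e^{-t}t^{d/2-1}$, to the constant function — equivalently $\int_0^\infty q(t)\, t^{d/2-1} e^{-t}\,dt = 0$, since $L_0^{d/2-1}=1$ and the Laguerre polynomials are orthogonal. So the hypothesis says precisely that $q$ has mean zero against the Laguerre weight.

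Now suppose, for contradiction, that $p$ has no sign change at or beyond $2\lambda$, i.e. $p$ has constant sign on $[2\lambda,\infty)$; after replacing $f$ by $-f$ we may assume $p \ge 0$ on $[2\lambda,\infty)$. I would like to build a polynomial multiplier that detects this. The natural candidate is $g(t) := (\text{something vanishing to even order at the roots of } L_m^{d/2-1} \text{ below } 2\lambda)$ — but the cleanest route is a Gaussian-quadrature argument: the $m$-point Gauss quadrature rule associated with the Laguerre weight $t^{d/2-1}e^{-t}$ on $[0,\infty)$ has nodes exactly at the roots of $L_m^{d/2-1}$, all of which lie in $(0,\infty)$, the smallest being $\lambda$, and positive weights, and it is exact for all polynomials of degree $\le 2m-1 = 2\lfloor n/2\rfloor + 1 \ge n+1 > \deg q$. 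Hence
\[
0 = \int_0^\infty q(t)\, t^{d/2-1} e^{-t}\, dt = \sum_{j=1}^{m} w_j\, q(t_j),
\]
where $0 < t_1 = \lambda < t_2 < \dots < t_m$ and $w_j > 0$. This forces $q(t_j) \le 0$ for some $j$, hence $q$ takes a nonpositive value at one of the Laguerre roots. That is the engine.

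The remaining step is to convert a sign statement about $q$ (equivalently about $\widehat f$) at these specific radii into the claimed sign-change statement about $p$. Here I would exploit the symmetry between $f$ and $\widehat f$: the map $k \mapsto (-1)^k$ is an involution, so running the same argument with the roles of $p$ and $q$ swapped shows that the hypotheses are symmetric, and in fact one should set up the dichotomy so that "$p$ has constant sign on $[2\lambda,\infty)$" can be played off against the quadrature identity applied to $p$ itself (not $q$): since $\widehat f(0) = 0$ is equivalent to $\int_0^\infty q\, d\mu = 0$, and since $p$ and $q$ are related by the eigenvalue flip, one checks that $\int_0^\infty p(t) t^{d/2-1}e^{-t}\,dt$ need not vanish, so the cleaner move is: apply Gauss quadrature to $q$ to get $q(t_{j_0}) \le 0$ for some node $t_{j_0} \ge \lambda$, i.e. $\widehat f$ is nonpositive at radius $|y|$ with $2\pi|y|^2 = 2 t_{j_0} \ge 2\lambda$; then — and this is the part requiring care — argue that if additionally $p \ge 0$ on $[2\lambda,\infty)$ with no sign change there, the pair $(f,\widehat f)$ violates the known structure (one can integrate $f$ against a suitable nonnegative-combination kernel, or invoke that the construction forces $\widehat f \ge 0$ at large radii by the same quadrature run on $q$ at the largest node $t_m$).

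The main obstacle I anticipate is precisely this last translation: a priori the quadrature argument most directly controls the sign of $q$ at the Laguerre nodes, whereas the theorem asserts something about sign \emph{changes} of $p$. The resolution I would pursue is to show that "no sign change of $p$ at or beyond $2\lambda$" lets one multiply $p$ by an auxiliary nonnegative polynomial of degree $\le 2m-1-n$ supported (in sign) so that $p(t)\cdot(\text{aux})(t)$ has a definite sign on all of $[0,\infty)$ except possibly on $[0,2\lambda)$, and then pair this against a measure built from the quadrature nodes — all nodes $t_j \ge \lambda$, so $2t_j \ge 2\lambda$, and the contribution from $[0,2\lambda)$ is empty — to reach a contradiction with $\int q\,d\mu = 0$ after using the $(-1)^k$ relation to pass between $p$ and $q$. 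Making the degree bookkeeping work (we have exactly $2m-1 \ge n+1$ degrees of freedom, which is tight) and verifying the auxiliary polynomial can be chosen with the right sign pattern is where the real content lies; everything else is the standard dictionary between Laguerre expansions, radial Fourier transforms, and Gaussian quadrature.
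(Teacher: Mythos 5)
Your proposal runs the right machinery (Gauss quadrature for the weight $t^{d/2-1}e^{-t}$ at the zeros of $L_m^{d/2-1}$, positivity of the weights, the degree bound $2m-1\ge n$), but it hinges on a translation of the hypothesis that is false. The condition $\widehat f(0)=0$ is \emph{not} equivalent to $\int_0^\infty q(t)\,t^{d/2-1}e^{-t}\,dt=0$: by orthogonality that integral equals $\Gamma(d/2)$ times the $L_0^{d/2-1}$-coefficient of $q$, i.e.\ $c_0\,\Gamma(d/2)$, so your claimed equivalence says $c_0=0$, whereas the linear relation you correctly wrote down, $\sum_k(-1)^k c_k\binom{k+d/2-1}{k}=0$, is the statement $q(0)=0$. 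These are different conditions, so the identity $0=\sum_j w_j q(t_j)$ on which your whole argument rests is unjustified. The correct (and much simpler) translation is $\widehat f(0)=\int_{\R^d}f(x)\,dx$, which after passing to the radial variable and substituting $u=\pi r^2$ becomes, up to a positive constant, $\int_0^\infty p(2u)\,e^{-u}u^{d/2-1}\,du=0$: the mean-zero condition lives on the $f$-side polynomial $u\mapsto p(2u)$, not on $q$.

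This mislocation is also the source of the obstacle you flag at the end and never resolve: you only obtain sign information about $q$ at the Laguerre nodes and admit you do not know how to convert it into a sign change of $p$; the proposed auxiliary-multiplier scheme has a degree budget of $2m-1-n\in\{0,1\}$ and is not carried out. Once the hypothesis is placed on the correct side, the transfer problem disappears entirely: applying the $m$-point quadrature rule to $u\mapsto p(2u)$ (degree $\le n\le 2m-1$) gives $\sum_{i=1}^m w_i\,p(2u_i)=0$ with $w_i>0$ and nodes $\lambda=u_1<\dots<u_m$. If $p$ had no sign change on $[2\lambda,\infty)$, all the values $p(2u_i)$ would have the same weak sign and hence all vanish; and since $p$ does not change sign there, each $2u_i$ would be a root of even multiplicity, giving $p$ at least $2m>n$ roots and forcing $p\equiv 0$, a contradiction. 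This closing multiplicity-and-degree count, which is the actual content of the paper's final step, is missing from your proposal, so as written the argument has a genuine gap both at the start (the hypothesis) and at the end (the conclusion about sign changes of $p$).
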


By \cite[Theorem~4]{IL92},\footnote{In our notation, Theorem~4 states that $\lambda > 2m + d/2-3 - \sqrt{1 + a(m-1)(m+d/2-2)}$ if $a > 4 \cos^2(\pi/(m+1))$, but that lower bound for $\lambda$ is not always true. In the notation used in \cite{IL92}, the issue in the proof is that the minimum of $y_n$ over $0<n<N$ may not occur at $n=N-1$. However, it is true for $a=4$: in that case one can check that $y_n$ is a decreasing function of $n$ when $\alpha>1$, while for $\alpha\le1$ this lower bound for the smallest Laguerre polynomial root is nonpositive and thus automatically holds.}
\begin{equation} \label{eq:lambda}
\lambda > 2m + d/2-3 - \sqrt{1 + 4 (m-1)(m+d/2-2)}.
\end{equation}
In particular, if $d \to \infty$ with $n = o(d)$ (and therefore $m = o(d)$ as well), then $2\lambda \ge (1-o(1))d$. If $f(x)$ has the same sign for all $x$ with $|x| \ge \rho$, then Theorem~\ref{theorem:main} yields $2\pi\rho^2 \ge 2\lambda \ge (1-o(1))d$, and Theorem~\ref{theorem:intro} follows. Specifically, this argument proves a lower bound for $\rho$, while the matching upper bound can already be achieved when $n=3$.

\begin{proof}[Proof of Theorem~\ref{theorem:main}]
We begin by observing that $\widehat{f}(0)=0$ means
\[
\int_{\R^d} p(2\pi |x|^2) e^{-\pi |x|^2} \, dx = 0.
\]
If we change to a radial variable $r = |x|$, this equation amounts to
\[
\int_0^\infty p(2\pi r^2) e^{-\pi r^2} r^{d-1} \, dr = 0,
\]
or equivalently
\[
\int_0^\infty p(2u) e^{-u} u^{d/2-1} \, du = 0
\]
after setting $u = \pi r^2$. Note that the Laguerre polynomials $L_k^{d/2-1}$ are orthogonal with respect to the measure $e^{-u} u^{d/2-1} \, du$ on $[0,\infty)$ that occurs in this equation.

Now we can apply Gauss quadrature \cite[Theorem~5.3.2]{AAR}. For the measure $e^{-u} u^{d/2-1} \, du$ on $[0,\infty)$, Gauss quadrature says that if $u_1<\dots<u_m$ are the roots of $L_m^{d/2-1}$, then there are positive weights $w_1,\dots,w_m$ such that for every polynomial $q$ of degree at most $2m-1$,
\[
\int_0^\infty q(u) e^{-u} u^{d/2-1} \, du = \sum_{i=1}^m w_i q(u_i).
\]
In particular, we can take $m = \lfloor n/2 \rfloor + 1$ as above and $q(u) = p(2u)$. Then $2m-1 \ge n \ge \deg(p)$, and therefore
\[
\sum_{i=1}^m w_i p(2u_i) = 0.
\]
If $p(2u_1),\dots,p(2u_m)$ all have the same sign, then they must all vanish, because the weights $w_1,\dots,w_m$ are positive. These roots of $p$ cannot all have multiplicity of at least $2$, because $p$ would then have at least $2m$ roots and would vanish identically. Thus, $p$ must have a sign change at some point greater than or equal to $2u_1$, which is $2\lambda$ in the notation of Theorem~\ref{theorem:main}. This completes the proof of Theorem~\ref{theorem:main}, and thus Theorem~\ref{theorem:intro} as well.
\end{proof}

In light of Theorem~\ref{theorem:intro}, it is natural to ask how well one can do in Problem~\ref{problem:eigenvalue} using polynomials of degree growing linearly in the dimension. In other words, using  functions of the form $f(x) = p(2 \pi |x|^2) e^{-\pi |x|^2}$ with $p$ a polynomial of degree $n = (c+o(1))d$ as $d \to \infty$ for some constant $c>0$. Combining Theorem~\ref{theorem:main} with the bound \eqref{eq:lambda} shows that the radius of the last sign change must satisfy
\[
\rho \ge \left(\sqrt{\frac{c + 1/2 - \sqrt{c(c+1)}}{\pi}} - o(1)\right) \sqrt{d}
\]
as $d \to \infty$. For example, to have any hope of proving the conjecture that $\rho=(1/\pi + o(1))\sqrt{d}$ from \cite{ACHLT}, $c$ must be at least $(\pi-2)^2/(8\pi) = 0.05185402\dots$. This lower bound is rather weak, and we expect that it is far from the truth. Based on the data from \cite[Appendix~B]{ACHLT}, we conjecture that $c$ must tend to infinity to obtain the optimal asymptotics. Theorem~\ref{theorem:main} is incapable of proving such a bound, because $\lambda \to 0$ as $c \to \infty$ (see, for example, \cite[(6.36.6)]{Sz75}).


\begin{thebibliography}{14}
\bibitem{ACHLT}
Nima Afkhami-Jeddi, Henry Cohn, Thomas Hartman, David de Laat, and Amirhossein Tajdini, 
  \emph{High-dimensional sphere packing and the modular bootstrap}, J.\ 
  High Energy Phys.\ \textbf{12} (2020), Paper No.~066, 44, \doi{10.1007/jhep12(2020)066}. \MR{4239386} 

\bibitem{ACHT}
Nima Afkhami-Jeddi, Henry Cohn, Thomas Hartman, and Amirhossein Tajdini, \emph{Free partition
  functions and an averaged holographic duality}, J.\ High Energy Phys.\ \textbf{1} (2021),
  Paper No.~130, 42, \doi{10.1007/jhep01(2021)130}. \MR{4257711}

\bibitem{AHT} Nima Afkhami-Jeddi, Thomas Hartman, and Amirhossein Tajdini,
  \emph{Fast conformal bootstrap and constraints on 3d gravity}, J.\ High Energy
  Phys.\ \textbf{5} (2019), 087, 24, \doi{10.1007/jhep05(2019)087}. \MR{3976800}

\bibitem{AAR}
George E.~Andrews, Richard Askey, and Ranjan Roy, \emph{Special functions}, Encyclopedia of
  Mathematics and its Applications, vol.~71, Cambridge University Press,
  Cambridge, 1999, \doi{10.1017/CBO9781107325937}. \MR{1688958} 

\bibitem{BCK10}
Jean Bourgain, Laurent Clozel, and Jean-Pierre Kahane, \emph{Principe d'{H}eisenberg et
  fonctions positives} (French, with English and French summaries), Ann.\ Inst.\ Fourier (Grenoble) \textbf{60} (2010),
  no.~4, 1215--1232, \doi{10.5802/aif.2552}. \MR{2722239} 

\bibitem{CG19}
Henry Cohn and Felipe Gon\c{c}alves, \emph{An optimal uncertainty principle in twelve
  dimensions via modular forms}, Invent.\ Math.\ \textbf{217} (2019), no.~3,
  799--831, \doi{10.1007/s00222-019-00875-4}. \MR{3989254}

\bibitem{CKMRV17}
Henry Cohn, Abhinav Kumar, Stephen D.~Miller, Danylo Radchenko, and Maryna Viazovska, \emph{The
  sphere packing problem in dimension 24}, Ann.\ of Math.\ (2) \textbf{185}
  (2017), no.~3, 1017--1033, \doi{10.4007/annals.2017.185.3.8}. \MR{3664817}

\bibitem{FK}
Daniel Friedan and Christoph A.~Keller, \emph{Constraints on 2d {CFT} partition
  functions}, J.\ High Energy Phys.\ \textbf{10} (2013), 180, front matter+38,
  \doi{10.1007/JHEP10(2013)180}. \MR{3120793}

\bibitem{GOR}
Felipe Gon\c{c}alves, Diogo Oliveira~e Silva, and Jo\~ao P.~G.~Ramos, \emph{New sign
  uncertainty principles},
  Discrete Anal.\ (2023), Paper No.~9, 46 pp., 
  \doi{10.19086/da.84266}. \MR{4620308}

\bibitem{HMR}
Thomas Hartman, Dalimil Maz\'{a}\v{c}, and Leonardo Rastelli, \emph{Sphere packing and quantum
  gravity}, J.\ High Energy Phys.\ \textbf{12} (2019), 048, 66,
  \doi{10.1007/jhep12(2019)048}. \MR{4075697}

\bibitem{MW}
Alexander Maloney and Edward Witten, \emph{Averaging over {N}arain moduli space}, J.\ High
  Energy Phys.\ \textbf{10} (2020), 187, 46, 
  \doi{10.1007/jhep10(2020)187}. \MR{4203918}

\bibitem{IL92}
Mourad E.~H.~Ismail and Xin Li,
  \emph{Bound on the extreme zeros of orthogonal polynomials}, Proc. Amer. Math. Soc.\ \textbf{115} (1992), no.~1,
  131--140, \doi{10.2307/2159575}. \MR{1079891}

\bibitem{Sz75}
G\'abor Szeg\H{o}, \emph{Orthogonal polynomials}, 4th ed., American Mathematical Society Colloquium Publications, Vol.~XXIII, American Mathematical Society, Providence, RI, 1975, \doi{10.1090/coll/023}. \MR{0372517} 

\bibitem{V17}
Maryna S.~Viazovska, \emph{The sphere packing problem in dimension 8}, Ann.\ of
  Math.\ (2) \textbf{185} (2017), no.~3, 991--1015,
  \doi{10.4007/annals.2017.185.3.7}. \MR{3664816}
\end{thebibliography}
\end{document}